\newtheorem{thm}{Theorem}
\newtheorem{prop}{Proposition}
\newtheorem{lem}[thm]{Lemma}
\theoremstyle{definition}
\newtheorem{defi}{Definition}[section]
\newtheorem{rem}{Remark}[section]
\newcommand\C{\mathbb{C}}
\newcommand\Z{\mathbb{Z}}
\newcommand\SL{\textrm{SL}}
\newcommand\PSL{\textrm{PSL}}
\newcommand\PGL{\textrm{PGL}}
\newcommand\G{\Gamma}
\newcommand{\Hom}{\mathrm{Hom}}
\def\square{\hfill${\vcenter{\vbox{\hrule height.4pt \hbox{\vrule width.4pt
height7pt \kern7pt \vrule width.4pt} \hrule height.4pt}}}$}
\author{E. Falbel, A. Guilloux}
\thanks{This work was supported in part by the ANR through the project "Structures G\'eom\'etriques et Triangulations".}
\address{Institut de Math\'ematiques de Jussieu-Paris Rive Gauche \\
Unit\'e Mixte de Recherche 7586 du CNRS \\
CNRS UMR 7586 and INRIA EPI-OURAGAN \\
Universit\'e Pierre et Marie Curie \\
4, place Jussieu 75252 Paris Cedex 05, France \\}
\email{elisha.falbel@imj-prg.fr\\ antonin.guilloux@imj-prg.fr }
\begin{document}

\title[] {Dimension of character varieties for $3$-manifolds}

\begin{abstract}
Let $M$ be a $3$-manifold, compact with boundary and $\Gamma$ its fundamental group.
Consider a complex reductive
 algebraic group $G$. The character variety $X(\Gamma,G)$ is
the GIT quotient $\Hom(\G,G)//G$ of the space  of morphisms $\G\to G$ by the natural action by conjugation of $G$. In the case $G=\SL(2,\C)$ this space has been thoroughly studied.

Following work of Thurston \cite{Thurston}, as presented by Culler-Shalen \cite{CullerShalen}, we give
a lower bound for the dimension of irreducible components of $X(\G,G)$ in terms of
the Euler characteristic $\chi(M)$ of $M$, the number $t$ of torus boundary components of $M$,
the dimension $d$ and the rank $r$ of $G$. Indeed, under mild assumptions on an
irreducible component $X_0$ of $X(\G,G)$, we prove the inequality 
$$\mathrm{dim}(X_0)\geq t \cdot r - d\chi(M).$$

\end{abstract}

\maketitle

\section*{Introduction}

Representation varieties of fundamental groups of 3-manifolds have been studied for a long time. 
 They are important
on one hand, to understand geometric structures on 3-manifolds and their topology as
 Thurston's work \cite{Thurston} has shown and,
on the other hand, to obtain more refined topological information on 3-manifolds as
 in the Culler-Shalen theory \cite{CullerShalen}.  
 
 The dimension of the character variety of $\pi_1(M)$ on $\PSL(2,\C)$ (where $M$ is a cusped manifold) at the representation corresponding to a complete finite volume hyperbolic structure is exactly the number of cusps (see  \cite{NeumanZagier} where it is shown that, in fact, that representation is a smooth point of the character variety). 
 A   general bound was established by Thurston and made more precise by Culler-Shalen: the dimension of irreducible components containing irreducible representations with
 non-trivial boundary holonomy is bounded below by the number of cusps.  In fact there are examples of
 irreducible representations with arbitrary large dimensions associated to hyperbolic manifolds with even only one cusp. 
 
 The proof of the bound given in both Thurston and Culler-Shalen uses trace identities for $\SL(2,\C)$ which are difficult to work with in the higher dimensional case. 
 
 In this paper we obtain a bound on the dimension of irreducible components which contain representations
 satisfying reasonable genericity conditions.  They include a notion of irreducibility and one of boundary regularity.  
 
 We first explain our result for $\SL(n,\C)$.  Although a particular case of our general result, it is
 simpler to state and contains the essential idea.  We prove the bound assuming that the image of the representation is Zariski dense and that the image of the fundamental group of the boundary  is regular in the sense that each image of a  boundary fundamental group has centralizer of minimal dimension (equal to the rank $n-1$ of $\SL(n,\C)$).  Note that the boundary regularity condition in the $\SL(2,\C)$ case simply means that the image is not central for each boundary component.
 
 We consider next representations into complex reductive affine algebraic groups. We use the general
 definitions of irreducibility as in Sikora \cite{Sikora} and the notion of regularity in reductive groups following Steinberg \cite{Steinberg} and prove the main Theorem \ref{thm:general}.

A special case of the theorem occurs when  $M$ is a cusped hyperbolic manifold and $\rho : \pi_1(M)\rightarrow \SL(n,\C)$ is the representation obtained by composing a representation $\pi_1(M)\rightarrow \SL(2,\C)$ obtained from the hyperbolic structure with the irreducible embedding  $\SL(2,\C)\rightarrow \SL(n,\C)$.   That representation is called a geometric representation and 
the dimension of the irreducible component of the character variety  $X(\G,\SL(n,\C))$ containing it
was first described in \cite{MP} (see also \cite{BFGKR, Guilloux}). 
Evidence that this bound was valid for all components containing irreducibles was obtained in computations of the character variety for the figure eight knot fundamental
group into $\SL(3,\C)$ \cite{FGKRT,HMP}.  In this case, irreducible components containing irreducible representations are all two dimensional.  Moreover each component contains Zariski-dense and  boundary regular unipotent representations already computed in \cite{falbeleight} (see also \cite{FKR14, CURVE} for other examples). 

One can use the results on local rigidity \cite{BFGKR} in order to prove that a component seen by the Deformation variety of an ideal triangulation (defined in \cite{BFGKR}) verifies this bound. 
We do not discuss this here, going straight to the proof which does not use triangulations.

As a final remark,  we should note that an a priori bound on the dimension of 
irreducible components certainly simplifies effective computations as in \cite{FGKRT} by eliminating the need to checking the existence of lower dimensional components.

We thank  Y. Benoist, P. Dingoyan, S. Diverio, P.-V. Koseleff, F. Rouillier, P. Will and M. Thistlethwaite for  many  discussions.

\section{A simple case}\label{s:SLn}

Before going to precise definitions in the framework of algebraic groups, let us sketch the proof
in a simple yet interesting case. Namely, in this section we assume $G=\SL(n,\C)$. It contains the original statement of Thurston (for the group $\SL(2,\C)$).
 Recall that its dimension $d$ is $n^2-1$ and its rank $r$ is
$n-1$: this is the minimal dimension of the centralizer of an element.

Let $M$ be a $3$-manifold,
compact with boundary. Let $t$ be the number of torus boundary components. We fix for each torus boundary
component an injection (denoted by an inclusion) $\pi_1(T)\subset \G$ . In other terms, for each of these torus boundary component, we choose a lift in the universal covering of $M$.

We want a lower bound on the dimension of components of $X(\G,G)=\Hom(\G,G)//G$. So consider an irreducible component $R_0$ of $\Hom(\G,G)$ and $X_0$ its projection in $X(\Gamma,G)$. We work with two assumptions on an element $\rho_0\in R_0$. One assumption should be a form of irreducibility for the whole representation
$\rho_0$. The second assumption is a regularity assumption for the image under $\rho_0$ of the fundamental groups of the torus boundaries. In the first version of our theorem, we assume:
\begin{itemize}
  \item Zariski-density: $\rho_0(\G)$ is Zariski-dense in $G$. 
  \item Boundary regularity: For any torus boundary component $T\subset \partial M$, 
  $\rho_0$ maps $\pi_1(T)$
  to a regular subgroup of $\SL(n,\C)$, i.e., 
   every subspace on which elements in $\rho_0(\pi_1(T))$ act by homothety is a line.
\end{itemize}

The second assumption is for example satisfied if the image of $\pi_1(T)$ is diagonalizable and every  
global eigenspace is a line, or if this image is unipotent and fixes a unique flag. It implies that the number of invariant subspaces is finite. It implies moreover that the centralizer of the image of $\pi_1(T)$ has dimension $n-1$.

We get the following particular case of the general theorem:
\begin{thm}
Let $X_0$ be the projection in $X(\G,G)$ of an irreducible component $R_0$ of $\Hom(\G,G)$ containing a representation $\rho_0$ which is Zariski-dense and boundary regular.

Then we have:
$$\dim(X_0)\geq (n-1)t-(n^2-1)\chi(M).$$
\end{thm}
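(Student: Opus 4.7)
The plan is to bound $\dim R_0$ by a direct codimension count in $G^g$ using a presentation of $\G=\pi_1(M)$ adapted to the boundary tori, and then descend to $X_0$ by subtracting the dimension of a conjugation orbit.

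First I would choose a $2$-complex $K$ homotopy-equivalent to $M$ in which each torus $T_i$ appears as a sub-complex with one vertex, two $1$-cells $\alpha_i,\beta_i$ generating the fixed copy of $\pi_1(T_i)\subset\G$, and one $2$-cell bounding the commutator word $[\alpha_i,\beta_i]$. Contracting a spanning tree of the $1$-skeleton of $K$ yields a presentation $\G=\langle x_1,\ldots,x_g\mid r_1,\ldots,r_s\rangle$ in which the $\alpha_i,\beta_i$ are among the generators, the $t$ commutators $[\alpha_i,\beta_i]=1$ are among the relators, and the deficiency satisfies $g-s=1-\chi(M)$.

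Next comes the codimension count. View $R_0$ as cut out in $G^g$ by the $s$ word maps $r_j=e$. A generic relator imposes codimension at most $d=n^2-1$. The crucial observation is that, at $\rho_0$, a torus commutator relator imposes codimension only $d-r=n^2-n$: boundary regularity forces $\rho_0(\alpha_i)$ and $\rho_0(\beta_i)$ to be regular elements of a common maximal torus $T\subset G$, and near such a pair the commuting variety $\{(A,B)\in G\times G\mid [A,B]=e\}$ is locally the $G$-saturation of $T\times T$, of dimension $(d-r)+2r=d+r$. Summing codimensions,
$$\dim R_0 \;\ge\; g\,d - (s-t)\,d - t(d-r) \;=\; (g-s)\,d + t\,r \;=\; (1-\chi(M))\,d + t\,r.$$
Finally, Zariski density implies that $\rho_0(\G)$ centralises only the finite centre of $\SL(n,\C)$, so every $G$-orbit in $R_0$ near $\rho_0$ has full dimension $d$; hence $\dim X_0 = \dim R_0 - d \ge t\,r - d\,\chi(M)$.

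The main obstacle I anticipate is the very first step: producing $K$ with both the prescribed torus sub-complex structure \emph{and} deficiency $g-s=1-\chi(M)$. A plain spine of $M$ collapses $\partial M$ and forgets the tori, while a CW decomposition of $M$ having $\partial M$ as a sub-complex must include $3$-cells, each of which naively decreases $g-s$ by one. One must either verify that these $3$-cells supply syzygies among the $2$-cell relators that exactly compensate this defect in the codimension estimate, or reformulate everything via cellular cohomology with coefficients in $\mathfrak{g}$: the identity $\chi(M;\mathfrak{g})=d\chi(M)$, the vanishings $H^0(M;\mathfrak{g})=H^3(M;\mathfrak{g})=0$ coming from Zariski density and $\partial M\neq\emptyset$, and the lower bound $\dim H^0(\partial M;\mathfrak{g})\ge t\,r$ (one $r$-dimensional regular centralizer per torus), combined with the long exact sequence of the pair $(M,\partial M)$ and Poincar\'e--Lefschetz duality, produce the same inequality.
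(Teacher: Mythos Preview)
Your approach is genuinely different from the paper's. The paper does \emph{not} attempt a direct codimension count in $G^g$ from a single presentation; instead it argues by induction on $t$, following Thurston and Culler--Shalen. At the inductive step one picks a torus $T$, uses Zariski density and boundary regularity to find $\gamma\in\Gamma$ so that $\rho_0(\pi_1(T))$ and $\rho_0(\gamma)$ generate an irreducible subgroup, and then \emph{drills} $M$ along (an arc representing) $\gamma$. The resulting $M'$ has a genus-$2$ boundary $\Sigma_2$ in place of $T$, one fewer torus, and $\chi(M')=\chi(M)-1$. One then compares $R_0\subset\Hom(\Gamma,G)$ with a component $V\subset\Hom(\Gamma',G)$ via the surjection $\Gamma'\to\Gamma$ and the restriction $\Hom(\Gamma',G)\to\Hom(\pi_1(\Sigma_2),G)$; Goldman's theorem on surface-group representation varieties gives that the subvariety $\{\tau(\delta)=1\}$ has codimension $\le d-r$ and that $\mathrm{Res}(\rho_0)$ is smooth in both, so a codimension lemma yields $\mathrm{codim}_V(R_0)\le d-r$. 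This is exactly the ``$d-r$ instead of $d$'' saving you want, but obtained one torus at a time via the surface $\Sigma_2$ rather than from a global presentation.

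The obstacle you flag is real and is precisely what the drilling manoeuvre circumvents. There is no evident way to build a $2$-complex $K\simeq M$ with one vertex, deficiency $1-\chi(M)$, \emph{and} each $T_i$ sitting inside as the standard torus subcomplex: a spine collapses $\partial M$, while a CW structure with $\partial M$ as a subcomplex has $3$-cells and the compensating syzygies are not easy to cash in as a pointwise codimension bound. The paper's induction is, in effect, a constructive substitute: drilling produces $\Gamma=\Gamma'/\!\langle\!\langle\delta\rangle\!\rangle$, and the genus-$2$ surface subgroup of $\Gamma'$ carries $\alpha,\beta,\gamma,\delta$ with $[\alpha,\beta][\gamma,\delta]=1$, so after killing $\delta$ the torus relation $[\alpha,\beta]=1$ appears in a presentation of $\Gamma$ of the correct deficiency---but this is achieved and exploited inductively, not globally.

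Your cohomological fallback, as written, does not reach the target either. With $H^0(M;\mathfrak g)=0$ and $H^3(M;\mathfrak g)=0$ one gets $\dim H^1-\dim H^2=-d\chi(M)$, hence the standard obstruction bound
\[
\dim_\rho R_0\ \ge\ \dim Z^1-\dim H^2\ =\ (d-\dim H^0)+\dim H^1-\dim H^2\ =\ d\bigl(1-\chi(M)\bigr),
\]
which is exactly the $t=0$ estimate; the boundary data you list ($\dim H^0(\partial M;\mathfrak g)\ge tr$) does not by itself lower $\dim H^2(M;\mathfrak g)$ or otherwise add $tr$ to this inequality. To extract the extra $tr$ one must argue that the quadratic (and higher) obstructions land in a proper subspace of $H^2(M;\mathfrak g)$ determined by the boundary---a nontrivial step you have not supplied, and one the paper avoids entirely by the drilling/surface-group route.
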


Note that the case of $\rho_0$ being the geometric representation is not handled by this theorem, as 
the representation is not Zariski-dense. It will be handled by the 
general case, see theorem \ref{thm:general}.

\begin{proof}
Let us sketch the proof, inspired by Thurston and Culler-Shalen \cite{CullerShalen}. It works by induction on the 
number $t$ of boundary tori.

\noindent{\bf Initialization: $t=0$.} The inequality $\dim(X_0)\geq -(n^2-1)\chi(M)$ in this case is 
very general and was already known by Thurston. We give a proof in section \ref{s:notorus}.  Note that 
if $\chi(M)$ is non-negative the formula does not carry any information.

\noindent{\bf Propagation: $t-1\to t$.} Let $T$ be a torus boundary in $\partial M$. We first use
\begin{lem}\label{lem:drill}
There exists an element $\gamma$ of $\Gamma$ such that $\rho_0(\pi_1(T))$ and $\rho_0(\gamma)$ generate an irreducible representation of the subgroup of $\Gamma$ generated by $\pi_1(T)$ and $\gamma$.
\end{lem}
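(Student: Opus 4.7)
The plan is a Zariski-density / parabolic-stabilizer argument. The boundary regularity hypothesis provides the key input: as noted immediately after its statement, the subgroup $H := \rho_0(\pi_1(T))$ has only finitely many proper invariant subspaces in $\C^n$ --- concretely, sums of one-dimensional simultaneous eigenspaces in the diagonalizable case, or the steps of the unique invariant flag in the unipotent case. I will take this finiteness as the extractable content of boundary regularity; the remainder of the proof is then formal.

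Enumerate these proper $H$-invariant subspaces as $V_1,\ldots,V_k$. For each $i$ the stabilizer $P_i := \{g \in \SL(n,\C) : g V_i = V_i\}$ is a proper parabolic subgroup of $G$, hence in particular a proper Zariski-closed subset; the finite union $\bigcup_{i=1}^k P_i$ is therefore still a proper Zariski-closed subset of $G$. Since $\rho_0(\G)$ is Zariski-dense in $G$ by hypothesis, it cannot be contained in this union, so I can pick $\gamma \in \G$ with $\rho_0(\gamma) \notin \bigcup_i P_i$, that is, $\rho_0(\gamma) V_i \neq V_i$ for every $i$.

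To conclude, let $W \subsetneq \C^n$ be any nonzero subspace preserved by the group $\langle H, \rho_0(\gamma)\rangle$. Then $W$ is in particular $H$-invariant, so $W = V_i$ for some $i$; but this contradicts the choice of $\gamma$. Hence the only $\langle H, \rho_0(\gamma)\rangle$-invariant subspaces of $\C^n$ are $0$ and $\C^n$, so the restriction of $\rho_0$ to the subgroup of $\G$ generated by $\pi_1(T)$ and $\gamma$ is irreducible.

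The essential obstacle --- really the only place where any hypothesis is used --- is extracting the finiteness of the lattice of $H$-invariant subspaces from boundary regularity. Once that finiteness is in place, the Zariski-density argument is entirely routine and immediately produces the desired $\gamma$. In the general reductive setting of Theorem~\ref{thm:general}, I expect the same template to go through, with ``invariant subspace'' replaced by the appropriate Steinberg-regularity datum and ``proper parabolic'' replaced by the corresponding proper Zariski-closed stabilizer.
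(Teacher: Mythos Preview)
Your proof is correct and follows essentially the same approach as the paper's: use boundary regularity to get finitely many proper $H$-invariant subspaces, then use Zariski-density of $\rho_0(\Gamma)$ to find $\gamma$ whose image lies outside the finite union of the corresponding (proper, closed) stabilizers. You have simply spelled out the ``by Zariski-density we find $\gamma$'' step in more detail than the paper does.
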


\begin{proof}
By the boundary regular assumption, $\rho_0(\pi_1(T))$ has a finite number of stable subspaces. By Zariski-density of $\rho_0(\Gamma)$, we find an element $\gamma\in\Gamma$ such that $\rho_0(\gamma)$ does not stabilize any of these subspaces.

Hence $\rho_0(\gamma)$ and $\rho_0(\pi_1(T))$ spans an irreducible subgroup of $G$.
\end{proof}

\begin{rem}
Using section 4.4 in \cite{Tits}, one may prove the above lemma (and then the theorem) with a mildest
irreducibility assumption and a different kind of regularity: namely that the Zariski-closure of $\rho_0$ is almost simple and irreducible, as e.g. the image of $SL_2(\C)$ under the irreducible representation and the boundary torus is mapped on a non trivial diagonal subgroup. This would be applicable to the geometric representation.

However we will not precisely discuss this, as this will be handled by the general theorem.
\end{rem}

Now, following Thurston and \cite{CullerShalen}, we drill along $\gamma$ in $M$. We get a new $3$-manifold $M'$ with a new genus $2$ boundary 
component denoted by $\Sigma_2$ and $t-1$ torus boundary components. Let $\delta$ be 
a meridian of the new handle of $\Sigma_2$ ($\delta$ is an element of $\pi_1(\Sigma_2)$ whose normal 
closure $N(\delta)$ in $\G'$ is the kernel of the map $\G'\to\G$).   Denote by $\Gamma'$ its fundamental 
group. We have an inclusion $\pi_1(\Sigma_2)\subset \G'$ and a surjective homomorphism $\G'\to \G$.  Summarizing:
\begin{figure}
\begin{center}
\def\svgwidth{1.2\textwidth}
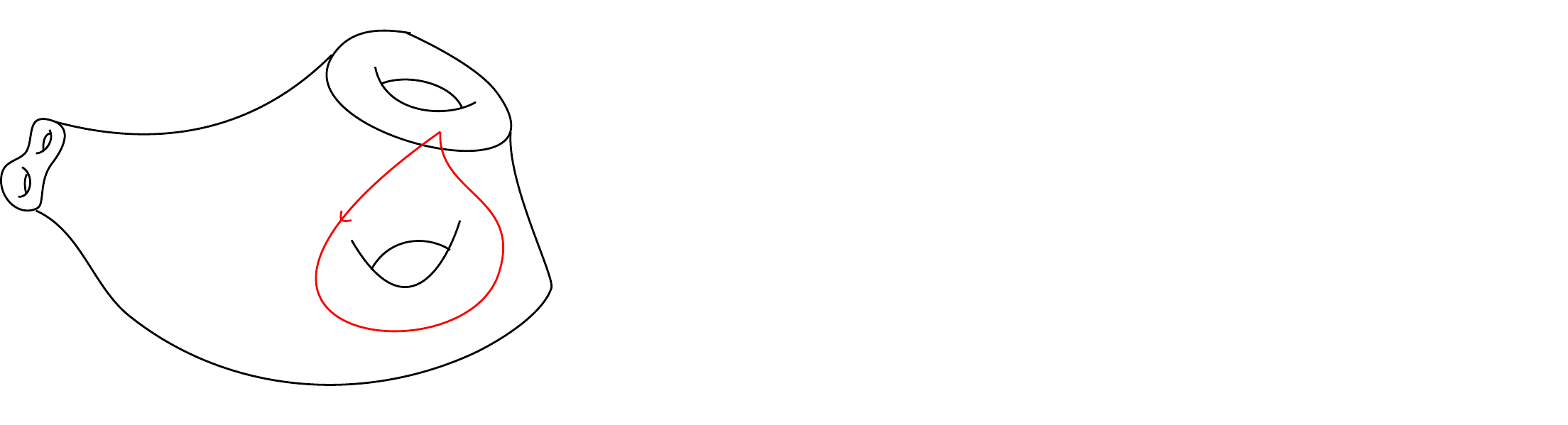
\end{center}
\caption{Drilling along $\gamma$}
\end{figure}
$$
\begin{CD}
1 @> >> N(\delta) @> >> \Gamma' @> >> \Gamma @> >>  1\\
@.@AA A @AA A@AA  A \\
1@> >> N(\delta)\cap \pi_1(\Sigma_2) @> >>\pi_1(\Sigma_2) @>  >> \pi_1(T) \star\Z @> >> 1\\
\end{CD}
$$
We obtain therefore a natural injection $\Hom(\G,\SL(n,\C))\hookrightarrow \Hom(\G',\SL(n,\C))$, that we see as an
inclusion. Let $V\subset \Hom(\G',G)$ be an irreducible component containing $R_0$. Then $R_0$ is an
irreducible component of the subvariety:
$$\left\{\rho \in V \mid \rho(\delta)=1\right\}.$$

Let $\mathrm{Res}$ be the restriction map:
$$\mathrm{Res}\: : \: \Hom(\G',\SL(n,\C)) \to \Hom(\pi_1(\Sigma_2),\SL(n,\C)).$$
We rephrase the previous remark on $R_0$ by saying that it is a component (in the variety $V$) of the preimage of the subvariety $$S=\left\{\tau \in \Hom(\pi_1(\Sigma_2),\SL(n,\C)) \mid \tau(\delta)=1\right\}$$
 of the variety $V'= \Hom(\pi_1(\Sigma_2),\SL(n,\C))$ by the regular map $\mathrm{Res}$.   We summarize the construction in the following maps:
 $$
\begin{tikzcd}
\Hom(\Gamma' , \SL(n,\C))\supset V \arrow[hookleftarrow]{r} \arrow{d}{\mathrm{Res}} &  \Hom( \Gamma,\SL(n,\C))\supset R_0 \arrow{d} \\
\Hom( \pi_1(\Sigma_2) ,\SL(n,\C)) = V' \arrow[hookleftarrow]{r} &  \Hom( \pi_1(T) \star \Z,\SL(n,\C))\supset S\\
\end{tikzcd}
$$

 We now use:
\begin{lem}\label{lem:surfaces}
The codimension of $S$ in $V'$ is at most $n^2-n$. Moreover $\mathrm{Res}(\rho_0)$ is a smooth point of both $S$ and $V'$.
\end{lem}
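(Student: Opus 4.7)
My plan is to identify $S$ as the representation variety of a well-understood group, compute the local dimensions of $V'$ and $S$ at $\mathrm{Res}(\rho_0)$ via group cohomology, and then extract both the codimension bound and the smoothness statements by matching these dimensions.

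First, I would fix a standard presentation $\pi_1(\Sigma_2) = \langle a_1, b_1, a_2, b_2 \mid [a_1,b_1][a_2,b_2]\rangle$ in which $\delta$ is conjugate to the generator $b_2$ (the meridian of the new handle). Killing $\delta$ collapses the presentation to $\langle a_1, b_1, a_2 \mid [a_1,b_1]\rangle = \pi_1(T)\star\Z$, so
\[
S \;\cong\; \Hom(\pi_1(T)\star\Z,\,\SL(n,\C)) \;\cong\; \Hom(\Z^2,\,\SL(n,\C)) \times \SL(n,\C),
\]
and $\mathrm{Res}(\rho_0)$ corresponds to the pair $(\rho_0|_{\pi_1(T)}, \rho_0(\gamma))$.

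For $V'$ at $\mathrm{Res}(\rho_0)$, I would use that the Zariski tangent space identifies with $Z^1(\pi_1(\Sigma_2), \mathrm{ad}\,\mathrm{Res}(\rho_0))$ and that obstructions lie in $H^2$. By Lemma~\ref{lem:drill}, $\mathrm{Res}(\rho_0)(\pi_1(\Sigma_2))$ is irreducible, so Schur's lemma forces $H^0 = 0$; Poincar\'e duality on the closed surface $\Sigma_2$ then yields $H^2 = 0$. Hence $V'$ is smooth at $\mathrm{Res}(\rho_0)$, and the Euler characteristic computation gives $\dim V' = (2g-1)d = 3d$.

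For $S$, the essential piece is $\Hom(\Z^2,\SL(n,\C))$ at the commuting pair $(g_0,h_0) := \rho_0|_{\pi_1(T)}$. By boundary regularity, the centralizer of $\rho_0(\pi_1(T))$ has dimension exactly $r = n-1$, so $\dim H^0 = r$; the vanishing Euler characteristic of the $2$-torus $K(\Z^2,1)$ then yields $\dim H^1 = 2r$, and hence $\dim Z^1 = \dim H^1 + d - \dim H^0 = d+r$. A matching lower bound for the variety itself comes from the explicit $(d+r)$-dimensional family of commuting pairs obtained by varying $g$ among regular elements near $g_0$ and taking $h$ in the unique maximal torus centralizing $g$. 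The match forces smoothness of $\Hom(\Z^2,\SL(n,\C))$ at $(g_0,h_0)$, hence smoothness of $S$ at $\mathrm{Res}(\rho_0)$, of dimension $d+r+d = 2d+r$. Combining, $\mathrm{codim}_{V'}(S) = 3d - (2d+r) = n^2-n$, and both smoothness statements have been established. The delicate point is producing this explicit family of commuting pairs to match the cohomological tangent-space bound: without boundary regularity the commuting variety can be singular and the cohomological bound can overshoot, so regularity plays exactly this role.
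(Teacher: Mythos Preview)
Your overall strategy---identifying $S$ with $\Hom(\pi_1(T)\star\Z,\SL(n,\C))$, computing Zariski tangent dimensions cohomologically, and comparing---matches the paper's, and your numerical values for $V'$ and $S$ agree with what the paper obtains via Goldman's theorem (Theorem~\ref{thm:goldman}). The smoothness of $V'$ at $\tau_0=\mathrm{Res}(\rho_0)$ via $H^0=H^2=0$ is also fine.

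There is, however, a genuine gap in your smoothness argument for $S$. Your explicit $(d+r)$-dimensional family of commuting pairs presupposes that $g_0$ is itself a \emph{regular element} of $\SL(n,\C)$, so that nearby regular $g$ exist with $g_0$ among them and each has an $r$-dimensional centralizer containing $h_0$ (your ``unique maximal torus''---which would further require $g$ to be semisimple). But the boundary-regularity hypothesis only says that the \emph{subgroup} $\rho_0(\pi_1(T))=\langle g_0,h_0\rangle$ has centralizer of dimension $r=n-1$; neither generator need be individually regular. For instance, in $\SL(3,\C)$ take $g_0=\mathrm{diag}(a,a,a^{-2})$ and $h_0=\mathrm{diag}(c,c^{-2},c)$ with $a,c$ generic: the pair is boundary-regular, yet neither element is regular, so $(g_0,h_0)$ does not lie in your family. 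To know it lies in the closure you would need Richardson's irreducibility theorem for the commuting variety (cf.\ Remark~\ref{remark:Z+Z}), which you do not invoke.

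The paper sidesteps this entirely. Having computed the Zariski tangent dimension $d+r$ at $\tau_0$, it simply observes that this is the \emph{minimal} possible Zariski tangent dimension over all of $\Hom(\Z^2,\SL(n,\C))$, since every abelian subgroup has centralizer of dimension at least $r$; a point realizing the minimal tangent dimension is automatically smooth. Replacing your family construction with this minimality remark closes the gap and brings your argument in line with the paper's.
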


\begin{proof}
It is not difficult to prove that under our hypotheses, using fundamental results on the character varieties of surface groups (see Theorem \ref{thm:goldman}).
A general version of this lemma will be stated in section \ref{s:surfacegroups}.

Observe first that $\tau_0$ is irreducible, hence a smooth point.  The dimension at $\tau_0=\mathrm{Res}(\rho_0)$ of $V'$ is $3(n^2-1)$ and $\tau_0$. 

Secondly, the dimension of $S$ equals the dimension of $\Hom(\pi_1(T),\SL(n,\C))$, which is
$n^2+n-2$ (here we use the regularity hypothesis: the dimension of the centralizer of $\tau_0(\pi_1(T)$ 
is the rank $r=n-1$) plus the dimension of the representations of $<\gamma>$ which is $n^2-1$. As it is the minimal 
possible dimension (again by the regularity hypothesis), it is a smooth point.

Computing the difference, we get $n^2-n$.
\end{proof}

A crucial yet elementary point is the following proposition:

\begin{prop}\label{fact:codim}
Let $f : V\to V'$ a regular map between irreducibe algebraic varieties and $S\subset V'$ a subvariety. Let $p\in V$ be such that $f(p)$ belongs to $S$ and moreover is smooth for both $S$ and $V'$.
Let $\mathrm{codim}(S)$ be the maximal codimension of an irreducible component of $S$ in $V'$. 

Then, any
irreducible component of $f^{-1}(S)$ containing $p$ has codimension at most $\mathrm{codim}(S)$.
\end{prop}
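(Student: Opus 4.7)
The plan is to reduce the statement to a purely local question at $p$ and invoke Krull's principal ideal theorem. The assumption that $f(p)$ is a smooth point of both $S$ and $V'$ is exactly what is needed to realize $S$ as a local complete intersection in $V'$ around $f(p)$, and this local complete intersection structure is the key input that controls the codimension of the preimage.

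First I would observe that since $f(p)$ is a smooth point of $S$, only one irreducible component $S_0$ of $S$ passes through $f(p)$, and its codimension $c$ in $V'$ satisfies $c\leq \mathrm{codim}(S)$. On a sufficiently small affine neighborhood $U'$ of $f(p)$, the intersection $S\cap U'$ therefore coincides with $S_0\cap U'$, so that locally at $p$ the sets $f^{-1}(S)$ and $f^{-1}(S_0)$ agree and any irreducible component of $f^{-1}(S)$ through $p$ restricts to an irreducible component of $f^{-1}(S_0)\cap f^{-1}(U')$ of the same codimension in $V$. Next, using that both $V'$ and $S_0$ are smooth at $f(p)$, I would choose regular functions $g_1,\dots,g_c$ on $U'$ whose differentials at $f(p)$ span the conormal space of $S_0$ in $V'$; a standard application of Nakayama's lemma in the local ring $\mathcal{O}_{V',f(p)}$ then gives $S_0\cap U'=V(g_1,\dots,g_c)$ after possibly shrinking $U'$.

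The last step is then immediate: pulling back, the subvariety $f^{-1}(S_0)\cap f^{-1}(U')$ of the irreducible affine variety $f^{-1}(U')$ is cut out by the $c$ regular functions $g_1\circ f,\dots,g_c\circ f$, so Krull's Hauptidealsatz ensures that every irreducible component has codimension at most $c\leq \mathrm{codim}(S)$ in $f^{-1}(U')$, hence in $V$ since $f^{-1}(U')$ is open in the irreducible variety $V$. The only slightly delicate point is the local complete intersection step: one must invoke both smoothness hypotheses simultaneously to guarantee that exactly $c$ equations suffice. Everything else is formal, and no assumption is needed on $f$ (e.g.\ flatness or dominance) beyond regularity.
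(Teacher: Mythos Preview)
Your argument is correct and takes a genuinely different route from the paper. The paper does not invoke Krull's Hauptidealsatz at all: instead, given a component $C$ of $f^{-1}(S)$ through $p$, it first replaces $p$ by a nearby point at which $C$ is also smooth (using that the locus of points in $C$ mapping to $S_{\mathrm{smooth}}\cap V'_{\mathrm{smooth}}$ is Zariski-open and nonempty, hence meets $C_{\mathrm{smooth}}$), and then compares dimensions via the tangent map, arguing that $T_pC=(T_pf)^{-1}(T_{f(p)}S)$ so that $T_pV/T_pC$ injects into $T_{f(p)}V'/T_{f(p)}S$. Your approach avoids this genericity manoeuvre entirely by exploiting the smoothness of both $S$ and $V'$ at $f(p)$ in a different way: it guarantees that $S$ is locally a complete intersection of the expected codimension, so the preimage is cut out by $c$ pulled-back equations and Krull's theorem applies directly. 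This is arguably the cleaner and more standard route; it also sidesteps the somewhat delicate identification $T_pC=(T_pf)^{-1}(T_{f(p)}S)$, which really pertains to the scheme-theoretic fibre rather than to a chosen reduced irreducible component. One cosmetic point: $f^{-1}(U')$ need not be affine, but this is harmless since codimension is local and you may further intersect with an affine open of $V$ containing $p$ before applying the Hauptidealsatz.
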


\begin{proof}
Let $C$ be a component of $f^{-1}(S)$ containing $p$. Up to restricting to an irreducible component of $S$ containing $f(C)$, we may assume $S$ to be irreducible. 

We note that the set $C_\mathrm{good}$ of points $p$ in $C$ whose image is a smooth point of both $S$ and $V'$ is Zariski-open. Indeed, the sets $V'_\mathrm{smooth}$ and $S_\mathrm{smooth}$ are Zariski-open subsets respectively of $V'$ and $S$. In turn, $S\cap V'_\mathrm{smooth}$ is a Zariski-open subset of $S$. Hence, the set $S_\mathrm{good}=S_\mathrm{smooth}\cap V'_\mathrm{smooth}$ is a Zariski-open subset of 
$S$. Taking its preimage by $f$, we get the subset $C_\mathrm{good}=f^{-1}(S_\mathrm{good})$ which becomes Zariski-open in $C$. 

Moreover, $C_\mathrm{good}$ is not empty as it contains the point $p$. As the subset $C_\mathrm{smooth}$ of smooth points of $C$ is another non-empty Zariski-open subset of $C$, we conclude that they intersect non-trivially. So, up to changing the point $p$ for a point in this intersection, we may assume that $C$ is smooth at $p$.

The map $f$ induces a Zariski tangent map $T_pf$ from $T_pV$ to $T_{f(p)}V'$ such that $T_pC=T_pf^{-1}(T_{f(p)}(S))$. This linear map $T_pf$ induces in turn an injective map from $T_pV/T_pC$ to $T_{f(p)}V'/T_{f(p)}S$. We deduce that: 
$$\mathrm{codim}_{T_pV}(T_pC)\leq \mathrm{codim}_{T_{f(p)}V'}(T_{f(p)}S).$$

Now, from the smoothness considerations we made, we get the equalities:
$$\mathrm{dim}(T_pC)=\mathrm{dim}(C), \quad \mathrm{dim}(T_{f(p)}S)=\mathrm{dim}(S)\textrm{ and }\mathrm{dim}(T_{f(p)}V')=\mathrm{dim}(V').$$ We do not have any smoothness information about $p$ inside $V$, but we still have the general property that $\mathrm{dim}(T_pV)\geq \mathrm{dim}(V)$.

Eventually, we get the desired inequality: 
$$\mathrm{codim}_V(C)\leq \mathrm{codim}_{T_pV}(T_pC)\leq \mathrm{codim}_{T_{f(p)}V'}(T_{f(p)}S)=
\mathrm{codim}_{V'}S.$$
\end{proof}

\begin{rem} 
The smoothness hypothesis above is essential, otherwise one needs other assumptions (cf. \cite{Mumford} Theorem 2 pg. 48).
\end{rem}

As a corollary, using Proposition \ref{fact:codim}, we conclude that the codimension of $R_0$ in $V$ 
is at most $n^2-n$.

By induction, (we are working at the level of representation variety, so there is an additional 
$n^2-1$):
\begin{eqnarray*}\dim(V)&\geq& (n-1)(t-1)-(n^2-1)(\chi(M')-1)\\&\geq&(n-1)t-(n^2-1)(\chi(M)-1)+n^2-n.
\end{eqnarray*}
The upper bound for the codimension of $R_0$ grants that:
$$\dim(R_0)\geq (n-1)t-(n^2-1)(\chi(M)-1).$$
Now projecting to $X(\G,G)$, we loose $n^2-1$ dimension (as $\rho_0$ is irreducible):
$$\dim(X_0)\geq (n-1)t-(n^2-1)\chi(M).$$
\end{proof}

\section{Background on character varieties and reductive groups}\label{s:charactervarieties}

We now proceed with the general result, beginning with some definitions and results on algebraic groups and regularity, character varieties and irreducibility and character varieties of surface groups. This will lead to a general theorem whose proof is essentially the same as before.
Let $G$ be an affine reductive algebraic group of dimension $d$ and whose center has dimension $z$.
Suppose $\Gamma$ is a finitely generated group.

\subsection{Character and representation varieties}

Our reference for character varieties is the paper of Sikora \cite{Sikora}.

\subsubsection{Generalities}

The set of all homomorphisms $\Hom(\G, G)$  is an affine variety.   The action of $G$ on 
$\Hom(\G, G)$ is algebraic but the set theoretical quotient $\Hom(\G, G)/G$ is not necessarily an affine variety.
For that reason one works with the categorical quotient $\Hom(\G, G)//G$.  It is an affine variety such that the projection
$$
\Hom(\Gamma, G) \rightarrow \Hom(\Gamma, G)//G
$$
is constant on $G$-orbits and such that any morphism from $\Hom(\G, G)$ to an algebraic variety, which is constant on orbits, factors through
the projection.  It exists with our hypothesis that $G$ is reductive.

\begin{rem}\label{remark:dropdown-dimension}
Let $R_0$ be an irreducible component of $\Hom(\G,G)$ and $X_0$ its projection to $X(\G,G)$. At a generic
point $\rho$, one may compute the dimension of $R_0$ and $X(\G,G)$ as the dimension of the Zariski tangent space. Let $Z(\rho)$ be the centralizer in $G$ of $\rho(\G)$. It is the subgroup of $G$ fixing $\rho$ when acting by conjugation.
We have then that $\dim(X_0)=\dim(R_0)-(\dim(G)-\dim(Z(\rho))$.

In any case, we have $$\dim(X_0) \geq \dim(R_0)-d+z.$$
\end{rem}

\begin{rem}\label{remark:Z+Z}
It is known that  $\Hom(\Z\oplus \Z,\SL(n,\C))$ is an irreducible variety \cite[Theorem C]{Ri}.  Also, as 
 $\Hom(\G_1\star \G_2,G)=\Hom(\G_1,G)\times \Hom(\G_2,G)$,
 $\Hom((\Z\oplus \Z)\star \Z ,\SL(n,\C))$ is an irreducible variety.
\end{rem}

\subsubsection{Dimension of representation varieties for surface groups}\label{s:surfacegroups}

The most studied representation variety is the case where $\G$ is a surface group.  In particular, dimensions of character varieties 
were computed by Weil (\cite{Weil}).  We refer to Goldman's paper for a modern presentation. 

\begin{thm}[Thm 1.2 in \cite{Goldman}]\label{thm:goldman}
Let $G$ be a reductive group, $\pi$ be the fundamental group of a closed oriented surface of genus $g$ 
and $\tau$ belong to $\Hom(\pi,G)$. Let $Z(\tau)$ denote the centralizer of the image of $\tau$.
Then the 
dimension of the Zariski tangent space at $\tau$ to the space $\Hom(\pi,G)$ is
$$(2g-1)\mathrm{dim}(G)+\mathrm{dim}(Z(G)).$$ 
Moreover, if the representation $\tau$ is irreducible then $\tau$ is a smooth point.

\end{thm}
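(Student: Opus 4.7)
The plan is to follow the classical strategy of Weil, as developed by Goldman: identify the Zariski tangent space $T_\tau\Hom(\pi,G)$ with the space of $1$-cocycles $Z^1(\pi,\mathfrak{g})$ of $\pi$ with values in the Lie algebra $\mathfrak{g}$ endowed with the $\pi$-module structure coming from $\mathrm{Ad}\circ\tau$, and then compute this dimension through the group cohomology of the surface group. The identification with cocycles is standard: writing a first order deformation as $\tau_\varepsilon(\gamma)=\exp(\varepsilon u(\gamma))\tau(\gamma)$, the condition of being a homomorphism modulo $\varepsilon^2$ translates exactly to the cocycle equation $u(\gamma_1\gamma_2)=u(\gamma_1)+\mathrm{Ad}(\tau(\gamma_1))u(\gamma_2)$.

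Next I would compute $H^*(\pi,\mathfrak{g})$. A closed oriented surface $\Sigma_g$ of genus $g$ is a $K(\pi,1)$, so this cohomology coincides with the cohomology of $\Sigma_g$ with coefficients in the local system associated to $\mathrm{Ad}\circ\tau$. Because $G$ is reductive, there exists a non-degenerate $\mathrm{Ad}$-invariant symmetric bilinear form on $\mathfrak{g}$, which identifies $\mathfrak{g}$ with its dual as a $\pi$-module. Poincar\'e duality for the closed orientable surface then yields $H^2(\pi,\mathfrak{g})\cong H^0(\pi,\mathfrak{g})^*$. Since $H^0(\pi,\mathfrak{g})$ is the fixed submodule $\mathfrak{g}^{\mathrm{Ad}\,\tau}$, i.e.\ the Lie algebra of $Z(\tau)$, both $H^0$ and $H^2$ have dimension $\dim Z(\tau)$. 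The Euler characteristic identity $\sum_i (-1)^i\dim H^i(\pi,\mathfrak{g})=(2-2g)\dim G$ then gives $\dim H^1(\pi,\mathfrak{g})=2\dim Z(\tau)+(2g-2)\dim G$.

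The dimension of $Z^1$ follows from the short exact sequence $0\to B^1\to Z^1\to H^1\to 0$ together with $B^1\cong\mathfrak{g}/\mathfrak{g}^{\mathrm{Ad}\,\tau}$, which has dimension $\dim G-\dim Z(\tau)$. Summing gives
$$\dim Z^1(\pi,\mathfrak{g})=(2g-1)\dim G+\dim Z(\tau),$$
which is the claimed dimension formula for the Zariski tangent space at $\tau$.

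The harder part is the smoothness assertion at an irreducible $\tau$. The approach I would take is to realize $\Hom(\pi,G)$ as the fiber over the identity of the relation map $R:G^{2g}\to G$ defined by $R(g_1,h_1,\dots,g_g,h_g)=\prod_i[g_i,h_i]$, and to check that the image of $dR_\tau$ is the annihilator of $\mathfrak{z}(\tau)$ under the invariant form, so that the fiber has its expected codimension. The obstruction to integrating a first order cocycle to an actual deformation lies in $H^2(\pi,\mathfrak{g})$, given by a cup product with respect to the Lie bracket; for irreducible $\tau$ one verifies that this obstruction vanishes, so that the local algebraic structure of $\Hom(\pi,G)$ at $\tau$ really reaches the dimension predicted by $\dim Z^1$, and $\tau$ is a smooth point. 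This obstruction analysis is the essential subtlety; the preceding dimension count is formal once the cohomological framework is in place.
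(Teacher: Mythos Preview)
The paper does not give its own proof of this statement; it is quoted as Theorem~1.2 from Goldman's paper and used as a black box. Your sketch is exactly the classical Weil--Goldman argument that one finds in the cited reference, and the dimension count via $Z^1(\pi,\mathfrak g)$, Poincar\'e duality, and the Euler characteristic is correct. Note incidentally that your computation produces $(2g-1)\dim G+\dim Z(\tau)$, which is the correct formula; the $Z(G)$ appearing in the paper's displayed formula is evidently a typo for $Z(\tau)$, since $Z(\tau)$ is what is introduced in the preceding sentence.

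One small imprecision in your smoothness discussion: you write that for irreducible $\tau$ ``this obstruction vanishes'', suggesting $H^2(\pi,\mathfrak g)=0$. But by your own duality argument $H^2\cong H^0\cong\mathfrak z(\tau)$, which for irreducible $\tau$ is the Lie algebra of the center $Z(G)$ and need not be zero when $G$ is reductive rather than semisimple. The clean argument is the one you state first: the relation map $R:G^{2g}\to G$ actually takes values in the derived subgroup $[G,G]$, and at an irreducible $\tau$ the differential $dR_\tau$ surjects onto $T_e[G,G]$ (its image is the annihilator of $\mathfrak z(\tau)=\mathfrak z(G)$), so $R^{-1}(e)$ is smooth of the predicted dimension at $\tau$. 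That suffices; the separate obstruction-theoretic remark can be dropped or rephrased.
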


\subsection{Irreducibility and regularity in algebraic groups}\label{s:IrreducibilityRegularity}

In this section, we review hypotheses needed to replace the so-called "irreducibility" and "regularity" of the case $G=\SL(n,\C)$. We want to deal with semisimple or reductive groups. The natural framework is then the rather general theory of algebraic groups.

\subsubsection{Reductive groups}

Let $G$ be the points over $\mathbb C$ of a reductive algebraic group. 
Let $d$ be its dimension, $z$ the dimension of its center and $r+z$ the minimal dimension of the centralizer of an element in $G$: $r$ is the rank of the semisimple part $G/Z(G)$, or the dimension of a maximal torus included in this semisimple part.

Recall that in the cases of $\PGL(n,k)$ or $\SL(n,k)$, we have $d=n^2-1$, $z=0$, $r=n-1$.

\subsubsection{Irreducibility and strong irreducibility}

We recall the general notion of irreducibility, as given in \cite{Sikora}:
\begin{defi}[Irreducibility]\label{def:irreducible}
We say that $\rho\in \Hom(\G,G)$ is \emph{irreducible} if its image is not
contained in any parabolic subgroup of $G$.
\end{defi}

Recall from \cite{Sikora} the following property:
\begin{prop}[Prop. 13 in \cite{Sikora}]
If $\rho$ is irreductible, the centralizer in $G$ of $\rho(\G)$ is the center of $G$.
\end{prop}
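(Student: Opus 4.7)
The plan is to prove the non-trivial inclusion $Z_G(\rho(\Gamma))\subset Z(G)$ by contradiction: I would pick an element $g\in Z_G(\rho(\Gamma))$ with $g\notin Z(G)$ and produce a proper parabolic subgroup of $G$ containing $\rho(\Gamma)$, contradicting irreducibility. Since $Z_G(\rho(\Gamma))$ is a Zariski-closed subgroup of $G$, it contains the Jordan components $g_s,g_u$ of $g$, so it is enough to rule out the two cases: (a) $g_u$ non-trivial, and (b) $g_s$ semisimple but non-central.

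For case (a), I would invoke the Borel--Tits theorem. The Zariski closure $U=\overline{\langle g_u\rangle}$ is a non-trivial connected unipotent subgroup (a copy of $\mathbb{G}_a$ in characteristic zero). Borel--Tits then produces a proper parabolic $P\subset G$ with $U\subset R_u(P)$ and $N_G(U)\subset P$. Any element commuting with $g_u$ normalizes $U$, so $Z_G(g_u)\subset N_G(U)\subset P$; hence $\rho(\Gamma)\subset Z_G(g)\subset Z_G(g_u)\subset P$, contradicting irreducibility.

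For case (b), I would use the dynamic/cocharacter construction of parabolics. Fix a maximal torus $T$ of $G$ with $g_s\in T$. Non-centrality of $g_s$ gives a root $\alpha\in\Phi(G,T)$ with $\alpha(g_s)\neq 1$, so the root system $\Phi_H=\{\beta\in\Phi(G,T):\beta(g_s)=1\}$ of the connected centralizer $H:=Z_G(g_s)^0$ is a proper subsystem of $\Phi(G,T)$. A standard computation then shows that the central torus $S:=Z(H)^0=\bigl(\bigcap_{\beta\in\Phi_H}\ker\beta\bigr)^0$ strictly contains $Z(G)^0=\bigl(\bigcap_{\beta\in\Phi(G,T)}\ker\beta\bigr)^0$. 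I would then pick a cocharacter $\lambda:\mathbb{G}_m\to S$ whose image is not contained in $Z(G)$, and form the Mumford parabolic $P(\lambda)=\{x\in G:\lim_{t\to 0}\lambda(t)x\lambda(t)^{-1}\text{ exists}\}$ with Levi $L(\lambda)=Z_G(\lambda(\mathbb{G}_m))$. Then $P(\lambda)$ is a proper parabolic, and since $\lambda$ lies in the center of $H$, one has $H\subset L(\lambda)\subset P(\lambda)$. Concluding $\rho(\Gamma)\subset Z_G(g_s)\subset P(\lambda)$ then completes the contradiction.

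The step I expect to be the main obstacle is the very last inclusion in case (b): the cocharacter construction delivers directly only $Z_G(g_s)^0\subset P(\lambda)$, and one has to argue that the full, possibly disconnected, centralizer $Z_G(g_s)$ still lies in $P(\lambda)$. This amounts to arranging that $\lambda$ is fixed by the action of the finite component group $Z_G(g_s)/Z_G(g_s)^0$ on $S$, i.e.\ that the subtorus of $S$ fixed by this action is still strictly larger than $Z(G)^0$, so that an admissible $\lambda$ exists. Modulo this bookkeeping, the proof is a clean dichotomy of Borel--Tits (for the unipotent part) and a cocharacter / Mumford-parabolic argument (for the semisimple part) along the Jordan decomposition, using only standard structural facts about reductive groups.
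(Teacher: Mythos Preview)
The paper does not prove this proposition at all; it is quoted verbatim as a result of Sikora, so there is nothing to compare your argument to on the paper's side.

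On the mathematical side, your case~(a) via Borel--Tits is clean and correct. Your worry in case~(b), however, is not mere bookkeeping: it is a genuine obstruction, and in fact the proposition \emph{as literally stated} is false. Take $G=\PGL(2,\C)$, let $T$ be the diagonal torus and $w$ the class of $\bigl(\begin{smallmatrix}0&1\\1&0\end{smallmatrix}\bigr)$, and let $\rho(\Gamma)$ be any subgroup Zariski-dense in $N_G(T)=T\cup wT$. Then $\rho$ is irreducible in the sense of Definition~\ref{def:irreducible} (the only proper parabolics are Borels, and $w$ lies in neither Borel containing $T$), yet the centraliser of $\rho(\Gamma)$ is $\{1,\overline{\mathrm{diag}(1,-1)}\}\cong\Z/2\Z$, strictly larger than $Z(G)=\{1\}$. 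In this example $Z_G(g_s)^0=T$, $S=T$, the component group acts on $S$ by inversion, and $S^{\pi_0}=\{1\}=Z(G)^0$: so your proposed fix of choosing a $\pi_0$-fixed cocharacter cannot work here.

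What Sikora actually proves (and what the paper needs, since only $\dim Z_G(\rho(\Gamma))=z$ is ever used) is that the centraliser is a \emph{finite extension} of $Z(G)$. Your two ingredients prove this cleanly once reorganised: suppose $C:=Z_G(\rho(\Gamma))$ has $\dim C>\dim Z(G)$, so $C^0\supsetneq Z(G)^0$. If $C^0$ contains a nontrivial unipotent, run your Borel--Tits argument. Otherwise $C^0$ is a torus strictly containing $Z(G)^0$; pick any cocharacter $\lambda:\mathbb G_m\to C^0$ with image not in $Z(G)$ and form $P(\lambda)$. Since now $\lambda$ lies in the centraliser of $\rho(\Gamma)$ itself (not merely of some $g_s$), one gets directly $\rho(\Gamma)\subset Z_G(\lambda)\subset P(\lambda)$, with $P(\lambda)$ proper, contradicting irreducibility. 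No component-group issue arises because you never pass through $Z_G(g_s)$.
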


We will need a slightly stronger notion of irreducibility known as strong irreducibility:
\begin{defi}[Strong irreducibility]
We say that $\rho\in \Hom(\G,G)$ is \emph{strongly irreducible} if the image of none of its finite index subgroups is contained in any parabolic subgroup of $G$.
\end{defi}

Let us note that this is a quite natural and usual notion in the context of discrete subgroups of $\SL(n,\C)$ or linear representations where it is rephrased as the fact that no strict subspace has a finite orbit under the group. Several appearances of this notion can be given, from divisible convex sets in \cite{Benoist} to random walks in groups \cite{GR} and finite groups representations \cite{Gross} or problems of commensurability of subgroups in the special linear group \cite{long-reid}.

\subsubsection{Regularity}

\begin{defi}[Regularity]\label{def:regular}
Let $H$ be a commutative group in $G$. We say that $H$ is regular if its centralizer in $G$
has the minimal dimension $r+z$.
\end{defi}

Now let $M$ be a compact  $3$-manifold with boundary, $\G$ its fundamental group. As before, for each torus $T$ we fix an inclusion $\pi_1(T)\subset \G$. We will always assume that the boundary of $M$ is not empty.

\begin{defi}[Boundary Regularity]\label{def:boundary-regular}
A representation $\rho\in \Hom(\G,G)$ is  \emph{boundary regular} if the image $\rho(\pi_1(T))$ is regular for each torus component $T\in \partial M$.
\end{defi}

For $G=\PGL(n,k)$ or $\SL(n,k)$, (the subgroup generated by) a single element is regular if and only if
all its eigenspaces are lines. A semisimple element (in the framework of algebraic group) is regular but a regular unipotent (as in \cite{Guilloux}) is regular too.

\section{The general case}

We may now state the general theorem:
\begin{thm}\label{thm:general}
Let $G$ be a complex reductive group of dimension $d$ and rank $r$ and whose centre has dimension $z$.

Let $M$ be a compact $3$-manifold, whose boundary is non-empty and contains $t$ torus components. 
Denote by $\G$ its fundamental group. Let $R_0$ be an irreducible component of $\Hom(\G,G)$ 
containing a strongly irreducible and boundary regular representation $\rho$. Let $X_0$ be its projection in the character variety $X(\G,G)$.  Then
$$\dim(X_0) \geq r.t - d\chi(M)+z.$$
\end{thm}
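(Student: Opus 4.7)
The plan is to induct on the number $t$ of torus boundary components, mirroring the structure of the argument in section \ref{s:SLn}. The base case $t=0$ asks for $\dim(X_0) \geq -d\chi(M) + z$ and is handled in section \ref{s:notorus}; the regularity hypotheses play no role there.

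For the inductive step, fix a torus $T \subset \partial M$. I first would prove a general analog of Lemma \ref{lem:drill}: there exists $\gamma \in \Gamma$ such that $\rho_0(\pi_1(T) \star \langle\gamma\rangle)$ is not contained in any proper parabolic of $G$. Boundary regularity forces $\rho_0(\pi_1(T))$ into a unique maximal torus $T_G$, and the set of parabolic subgroups containing $T_G$ is finite; call them $P_1,\dots,P_k$. If no such $\gamma$ existed, then $\rho_0(\Gamma) = \bigcup_i (\rho_0(\Gamma) \cap P_i)$ would be a group expressed as a finite union of subgroups, so by B.H. Neumann's lemma some $\rho_0(\Gamma) \cap P_i$ would have finite index, contradicting strong irreducibility of $\rho_0$.

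Having fixed $\gamma$, drill $M$ along a curve representing it to obtain $M'$ with $t-1$ tori and one genus-$2$ boundary component $\Sigma_2$, so that $\chi(M')=\chi(M)-1$. As in the simple case, $R_0$ appears as a component of $\mathrm{Res}^{-1}(S)$ inside some component $V$ of $\Hom(\Gamma',G)$, where $\mathrm{Res}: V \to V':=\Hom(\pi_1(\Sigma_2),G)$ and $S=\{\tau \in V' : \tau(\delta)=1\}$. The general form of Lemma \ref{lem:surfaces} runs as follows: $\tau_0 := \mathrm{Res}(\rho_0)$ factors through $\pi_1(T)\star \Z$ and is irreducible by the choice of $\gamma$, hence a smooth point of $V'$ of dimension $3d + z$ by Theorem \ref{thm:goldman} (with $g=2$). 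Boundary regularity yields $\dim \Hom(\pi_1(T),G) = d + r + z$ at $\tau_0|_{\pi_1(T)}$, a point of minimal centralizer dimension and hence smooth; then $\dim S = (d+r+z) + d = 2d + r + z$ using the irreducibility noted in Remark \ref{remark:Z+Z}, and $\mathrm{codim}_{V'}(S) = d - r$. Proposition \ref{fact:codim} now gives $\mathrm{codim}_V(R_0) \leq d-r$.

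To close the induction, applying the inductive hypothesis to $M'$ (on which $\rho_0$ remains strongly irreducible and boundary regular on the remaining $t-1$ tori) and lifting from $X(\Gamma',G)$ to $\Hom(\Gamma',G)$ via Remark \ref{remark:dropdown-dimension} (which costs $d-z$) yields $\dim V \geq r(t-1) - d(\chi(M)-1) + z + (d-z) = rt - r - d\chi(M) + 2d$. Combining with the codimension bound on $R_0$ in $V$ gives $\dim R_0 \geq rt - d\chi(M) + d$, and projecting to $X_0$ costs exactly $d-z$ (irreducibility forces the centralizer to be $Z(G)$), yielding the desired $\dim X_0 \geq rt - d\chi(M) + z$. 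The hard part is the generalized drilling lemma: the transparent Zariski-density argument in the simple case must be replaced by the extraction of a finite collection of parabolics from regularity combined with B.H. Neumann's covering lemma to exploit strong irreducibility; a secondary subtlety is ensuring smoothness of $\tau_0$ as a point of $S$, which again rests on the minimality of centralizer dimension coming from boundary regularity.
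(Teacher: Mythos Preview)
Your overall structure matches the paper's exactly: induction on $t$, base case via section \ref{s:notorus}, inductive step via drilling, the surface-group codimension computation (your numbers $3d+z$ and $2d+r+z$ agree with the paper's Lemma \ref{lem:surfacesgeneral}), and Proposition \ref{fact:codim} to transfer the codimension bound. The final bookkeeping is correct and is just the paper's section \ref{ss:general} written out.

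There is one genuine error in your drilling lemma. The claim that boundary regularity forces $\rho_0(\pi_1(T))$ into a \emph{unique maximal torus} $T_G$ is false: a regular unipotent element (a single Jordan block in $\SL(n,\C)$, say) has centraliser of the minimal dimension $r$ and is therefore regular in the sense of Definition \ref{def:regular}, yet lies in no maximal torus whatsoever. The correct finiteness input, which the paper takes from Steinberg \cite{Steinberg}, is that a regular abelian subgroup lies in only finitely many Borel subgroups of $G$, hence (each Borel being contained in finitely many parabolics) in only finitely many maximal parabolic subgroups $P_1,\ldots,P_k$. Once you replace your maximal-torus claim by this, the rest of your argument is sound.

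With that correction in place, your use of B.H.~Neumann's covering lemma is a genuinely different route from the paper's for the same step. The paper argues algebro-geometrically: it passes to the Zariski closure $G'$ of $\rho_0(\Gamma)$, observes that each $G'\cap P_i$ is a proper closed subgroup of positive codimension (strong irreducibility is what rules out finite index here), and concludes that $\rho_0(\Gamma)$ meets the Zariski-open complement of $\bigcup_i P_i$. Your argument stays purely at the level of the abstract group $\rho_0(\Gamma)$ and is more elementary; it also makes especially transparent why \emph{strong} rather than ordinary irreducibility is the natural hypothesis, since Neumann's lemma produces exactly a finite-index subgroup landing in some $P_i$.
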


\subsection{A general version of key lemmas}

\begin{lem}\label{lem:drillgeneral}
Let $\rho$ be a strongly irreducible, boundary regular representation of $\G$ in $G$ and $T$ be a torus boundary component.

Then there exists $\gamma\in\Gamma$ such that the subgroup of $G$ generated by $\rho(\pi_1(T))$ 
and $\rho(\gamma)$ is irreducible.
\end{lem}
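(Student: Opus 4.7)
The plan is to find $\gamma$ such that $\rho(\gamma)$ lies outside every parabolic subgroup of $G$ that contains $\rho(\pi_1(T))$. Any parabolic containing $\langle \rho(\pi_1(T)), \rho(\gamma) \rangle$ would then contain $\rho(\pi_1(T))$ and hence be among these, contradicting the choice of $\gamma$; the generated subgroup is therefore irreducible in the sense of Definition \ref{def:irreducible}. The construction splits into two parts: (i) only finitely many parabolics of $G$ contain $\rho(\pi_1(T))$, thanks to boundary regularity; and (ii) $\rho(\Gamma)$ cannot be covered by any finite union of proper parabolics, thanks to strong irreducibility.

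For (i), let $H = \rho(\pi_1(T))$ with Zariski closure $\overline{H}$. Then $\overline{H}$ is commutative and has $\dim Z_G(\overline{H}) = \dim Z_G(H) = r+z$, so $\overline{H}$ is regular in the sense of Definition \ref{def:regular}. The set of points of $\overline{H}$ where the centralizer dimension jumps strictly above $r+z$ is a proper Zariski-closed subset, so a generic element $h_0 \in \overline{H}$ satisfies $\dim Z_G(h_0) = r+z$, i.e.\ $h_0$ is a regular element of $G$. By Steinberg's classical theorem on regular elements in reductive groups, $h_0$ lies in only finitely many Borel subgroups of $G$, hence in only finitely many parabolic subgroups. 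Every parabolic containing $H$ contains $h_0$, so we obtain a finite list $P_1,\dots,P_k$ of parabolics of $G$ containing $H$.

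For (ii), let $L = \overline{\rho(\Gamma)}^{\mathrm{Zar}}$ with identity component $L^0$. The image under $\rho$ of any finite-index subgroup of $\Gamma$ has Zariski closure containing $L^0$, so strong irreducibility of $\rho$ forces $L^0$ not to be contained in any proper parabolic of $G$. Since $L^0$ is an irreducible variety, $L^0 \cap P_i$ is a proper closed subvariety of $L^0$ for each $i$, and by translating the same holds on each connected component of $L$; hence $\bigcup_{i=1}^k (L \cap P_i)$ is a proper Zariski-closed subset of $L$. By Zariski density of $\rho(\Gamma)$ in $L$, there exists $\gamma \in \Gamma$ with $\rho(\gamma) \notin P_i$ for all $i$, which completes the argument.

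The main obstacle is step (i): passing from the abstract minimal-centralizer regularity hypothesis to the concrete finiteness of parabolics containing $\rho(\pi_1(T))$. For $G = \SL(n,\C)$ this was transparent via the parabolic/flag correspondence in Lemma \ref{lem:drill}; in general one has to invoke Steinberg's structure theory of regular elements in reductive groups. Step (ii) is then a routine Zariski density argument, essentially identical to the $\SL(n,\C)$ case.
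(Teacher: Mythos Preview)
Your overall strategy coincides with the paper's: reduce to finitely many parabolics containing $H=\rho(\pi_1(T))$ via Steinberg's theory, then use strong irreducibility to avoid all of them. Step (ii) is essentially the paper's argument, and your formulation through the identity component $L^0$ is clean.

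There is, however, a genuine gap in step (i). You assert that the locus in $\overline{H}$ where $\dim Z_G(\cdot)>r+z$ is a \emph{proper} closed subset, and hence that $\overline{H}$ contains a regular element $h_0$. Upper semicontinuity gives closedness, but properness is unjustified and in fact can fail. For instance, take $G=\SL(4,\C)$ (so $r=3$, $z=0$) and let $H$ be generated by $a=\mathrm{diag}(1,1,-1,-1)$ and $b=\mathrm{diag}(1,-1,1,-1)$. Then $\overline{H}=H$ is finite, $Z_G(H)$ is the diagonal torus (dimension $3=r+z$), so $H$ is regular in the sense of Definition~\ref{def:regular}; yet every element of $H$ has centralizer of dimension at least $7$, so no $h_0\in\overline{H}$ is a regular element. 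Your reduction to Steinberg's theorem on regular \emph{elements} therefore breaks down, even though the conclusion (finiteness of Borels containing $H$) still happens to hold in this example.

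The paper circumvents this by invoking Steinberg directly for the finiteness of Borel subgroups containing the regular \emph{subgroup} $H$, without passing through a single regular element, and then observing that each Borel is contained in only finitely many maximal parabolics; since irreducibility only requires avoiding maximal parabolics, this suffices. You should either argue along those lines, or restrict step (i) to the case where $\overline{H}$ has positive dimension and justify the generic regularity claim there, handling the finite case separately.
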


\begin{proof}
Modding out by the center of $G$, we may assume that $G$ is semisimple (the 
irreducibility assumption is preserved, see \cite[Lemma 11]{Sikora}).
We obtain the lemma by using results on regular elements given by Steinberg \cite{Steinberg}. 

By hypothesis, the Zariski-closure $G'$ of $\rho(\Gamma)$ is not contained in any parabolic
subgroup of $G$ and neither is any of its finite index subgroup. Moreover the subgroup $H=\rho(\pi_1(T))$ is a regular subgroup of $G$. From Steinberg \cite{Steinberg}, we know that it is included in a finite number of Borel subgroups of $G$.  On the other hand, for each Borel subgroup, there is only a finite number of maximal parabolic subgroups
of $G$ containing it \cite[Section 0.8]{Humphreys}. Hence $H$ is included in a finite number of maximal parabolic subgroups of G.

Consider the set $A$ of elements $g\in G$ such that the 
subgroup generated by $g$ and $H$ is not contained in any parabolic subgroup of $G$. Its complement is 
the union of the finite number of maximal parabolic subgroups of $G$ containing $H$.
subgroups of $G$. The intersection of each of these parabolic subgroups with $G'$ is a closed subgroup 
of $G'$. Moreover, as $G'$ is neither contained nor virtually contained in any parabolic subgroup, this closed subgroup is a proper parabolic subgroup of $G'$ of positive codimension (it is not a finite index subgroup). 
Hence the complement of $A$ is Zariski-closed in $G$ and $A\cap G'$ is the intersection of a finite number of non empty Zariski-open subsets of $G'$ whose complement has positive codimension.

As $G'$ is the Zariski-closure of $\rho(\Gamma)$, there exists an element $\gamma\in \Gamma$ such that 
$\rho(\gamma)$ belongs to $A$.  Therefore $\rho(\gamma)$ and $\rho(\pi_1(T))$ are not included in any parabolic subgroup: the group they generate is irreducible.
\end{proof}

\begin{lem}\label{lem:surfacesgeneral}
Let $\tau_0$ be an irreducible representation of $\pi_1(\Sigma_2)$ in $G$ and with a symplectic basis 
of homology $\alpha,\beta,\gamma,\delta$. Suppose $\tau_0(\delta)=1$ and $\tau_0(<\alpha,\beta>)$ is 
regular.

Then the codimension in $\Hom(\pi_1(\Sigma_2),G)$ of the subvariety $S$ defined by $\tau(\delta)=1$ is 
at most $d-r$. Moreover, $\tau_0$ is a smooth point in both the subvariety $S$ and $\Hom(\pi_1(\Sigma_2),G)$.
\end{lem}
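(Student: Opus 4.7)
The plan is to identify $S$ concretely using the surface group relation, then compare its local dimension with that of the ambient variety via Goldman's formula.

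Using the presentation $\pi_1(\Sigma_2)=\langle\alpha,\beta,\gamma,\delta\mid[\alpha,\beta][\gamma,\delta]\rangle$, the condition $\tau(\delta)=1$ trivializes $[\tau(\gamma),\tau(\delta)]$, so the surface relation collapses to $[\tau(\alpha),\tau(\beta)]=1$ while $\tau(\gamma)$ is unconstrained. The evaluation map $\tau\mapsto(\tau(\alpha),\tau(\beta),\tau(\gamma))$ then identifies
$$S\;\cong\;\Hom(\Z^2,G)\times G,$$
the first factor recording the commuting pair $(\tau(\alpha),\tau(\beta))$ and the second the free element $\tau(\gamma)$.

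Next I compute the dimensions at $\tau_0$. Irreducibility of $\tau_0$ ensures, by Proposition 13 of \cite{Sikora}, that its centralizer is the center of $G$, of dimension $z$; Goldman's theorem (Theorem \ref{thm:goldman}) then shows that $\tau_0$ is a smooth point of $\Hom(\pi_1(\Sigma_2),G)$ of dimension $(2\cdot 2-1)d+z=3d+z$. For $S$, the factor $G$ contributes dimension $d$, and I analyse $\Hom(\Z^2,G)$ at the commuting pair $(\tau_0(\alpha),\tau_0(\beta))$. The boundary regularity hypothesis says precisely that the centralizer of this commutative subgroup attains its minimal dimension $r+z$. Applying Goldman's tangent-space formula to the genus-one surface $T$ (with $\pi=\Z^2$, $g=1$) yields
$$\dim T_{(\tau_0(\alpha),\tau_0(\beta))}\Hom(\Z^2,G)=d+(r+z),$$
which is the minimal possible local dimension; since the variety must attain at least this dimension and can attain no more, the Zariski tangent space and the local dimension coincide, forcing smoothness. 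Therefore $S$ is smooth at $\tau_0$ of dimension $(d+r+z)+d=2d+r+z$.

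Subtracting gives the codimension $(3d+z)-(2d+r+z)=d-r$, and the smoothness of $\tau_0$ in both $\Hom(\pi_1(\Sigma_2),G)$ and $S$ has been established along the way. The main obstacle I anticipate is precisely the smoothness of $\Hom(\Z^2,G)$ at the regular commuting pair: this reflects the standard fact that over the regular locus, the commuting variety of a reductive group is locally a torus bundle of dimension $d+r+z$, and one must check carefully that the boundary regularity hypothesis places the pair $(\tau_0(\alpha),\tau_0(\beta))$ in this locus (for instance via a Luna-slice argument through the maximal torus $Z(\langle\tau_0(\alpha),\tau_0(\beta)\rangle)^0$).
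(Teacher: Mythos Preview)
Your proof is correct and follows essentially the same route as the paper: both identify $S$ with $\Hom(\Z^2,G)\times G$ via the surface relation, apply Goldman's formula to get $\dim T_{\tau_0}\Hom(\pi_1(\Sigma_2),G)=3d+z$ (with smoothness from irreducibility), compute the Zariski tangent space of $S$ at $\tau_0$ as $(d+r+z)+d$ using Goldman again for the torus factor, and conclude smoothness from the minimality of this dimension under the regularity hypothesis. Your closing remark about verifying smoothness of $\Hom(\Z^2,G)$ at the regular pair via a Luna-slice argument is additional caution; the paper simply invokes the same minimality-of-tangent-dimension argument you already gave and does not elaborate further.
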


\begin{proof}
By \cite[Prop 1.2]{Goldman} reviewed before (see Theorem \ref{thm:goldman}), the irreducible representation $\tau_0$ is a smooth point of $\Hom(\pi_1(\Sigma_2),G)$ and the dimension at this point is $3d+z$.

Let us now compute the dimension of the subvariety $S$ defined by $\tau(\delta)=1$. A point in this variety is 
given by the image of the group generated by $\alpha$, $\beta$ and $\gamma$. Moreover, from the presentation of 
$\pi_1(\Sigma_2)$ and the condition $\tau(\delta)=1$, we get that the images of $\alpha$ and $\beta$ 
commute. So the dimension of $S$ is the dimension at $(\tau_0)_{|\langle \alpha,\beta\rangle}$ of
the space of representations of $\mathbb Z^2$ -- i.e. the fundamental group of the torus -- plus the 
dimension of the representations of $\gamma$. This gives (applying again theorem \ref{thm:goldman}) 
that the dimension of the Zariski-tangent subspace at $\tau_0$ to the subvariety $S$ is $(d+r+z)+d$. As it is the minimal possible dimension for a Zariski-tangent subspace of $S$ (by the regularity hypothesis), we get that $\tau_0$ is a smooth point of $S$.

We conclude that the codimension is at most $d-r$ as announced.
\end{proof}

\subsection{When no boundary component is a torus}\label{s:notorus}

We can follow step-by-step the presentation of Culler-Shalen~\cite{CullerShalen} in this case. We get:
\begin{prop}[No torus boundary component]
Let $M$ be a compact $3$-manifold whose boundary is non empty ans does not contain a torus component.
 Let $G$ be a real or complex algebraic group of dimension $d$. Then for any irreducible component $R_0$ of
$Hom(\Gamma,G)$ and $X_0$ its projection in $X(\G,G)$, we have :
$$\mathrm{dim}(X_0)\geq - d \chi(M)+z.$$
\end{prop}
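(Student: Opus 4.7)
The plan is to follow the original Culler--Shalen argument~\cite{CullerShalen}: use a $2$-dimensional spine of $M$ to obtain a finite presentation of $\Gamma$, and then bound the dimension of $\Hom(\Gamma,G)$ from below by counting relations.

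First I would invoke the classical topological fact that a compact $3$-manifold $M$ with non-empty boundary deformation retracts onto a $2$-dimensional CW complex (a spine), e.g.\ by choosing a handle decomposition with no $3$-handles. If $\partial M$ happens to contain $2$-sphere components, I would first cap them off with $3$-balls: this does not change $\pi_1(M)$ but decreases $\chi(M)$, so the inequality to prove only becomes stronger, and one may therefore reduce to the case where $\partial M$ has no sphere component. Taking a spine with a single $0$-cell, $n_1$ one-cells and $n_2$ two-cells then yields a finite presentation
$$\Gamma = \langle\, x_1,\ldots,x_{n_1}\ \mid\ r_1,\ldots,r_{n_2}\,\rangle,$$
and computing the Euler characteristic of the spine gives $n_1 - n_2 = 1 - \chi(M)$.

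Next, I would observe that this presentation realises $\Hom(\Gamma,G)$ as the closed subvariety of $G^{n_1}$ cut out by the $n_2$ equations $r_j(\rho(x_1),\ldots,\rho(x_{n_1}))=1_G$. Each such word equation is a single equation in the $d$-dimensional group $G$, so imposes codimension at most $d$; hence every irreducible component of $\Hom(\Gamma,G)$ has codimension at most $n_2 d$ in $G^{n_1}$, whence
$$\dim(R_0)\ \geq\ n_1 d - n_2 d\ =\ (1-\chi(M))\,d.$$
Projecting down to the character variety via Remark~\ref{remark:dropdown-dimension} costs at most $d-z$ in dimension, so
$$\dim(X_0)\ \geq\ \dim(R_0) - d + z\ \geq\ -d\chi(M) + z,$$
as required.

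The hard part here, if any, is really just the topological input (existence of a $2$-spine and its interaction with the Euler characteristic); once the presentation is in hand, the rest is elementary codimension bookkeeping for polynomial equations in $G^{n_1}$. In particular, and in contrast to the torus-boundary case handled earlier, no hypothesis on $\rho_0$ (irreducibility, Zariski-density, or regularity) enters the argument.
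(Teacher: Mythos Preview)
Your argument is essentially identical to the paper's: take a $2$-dimensional spine with a single $0$-cell, read off a finite presentation, and bound the codimension of $\Hom(\Gamma,G)$ in $G^{n_1}$ by $n_2 d$; then pass to $X_0$ via Remark~\ref{remark:dropdown-dimension}. The only divergence is your preliminary step of capping off sphere boundary components, which is unnecessary---any compact $3$-manifold with non-empty boundary retracts onto a $2$-complex regardless of whether spheres occur in $\partial M$---and is in fact mildly dangerous: if \emph{every} boundary component happened to be a sphere, capping would produce a closed manifold and your spine argument would no longer apply. Simply omit that step and you recover exactly the paper's proof.
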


\begin{proof}
Following remark \ref{remark:dropdown-dimension}, we want to prove $\mathrm{dim}(R_0)\geq -d\chi(M)+d$.

We assumed that $\partial M\neq \emptyset$. Then $M$ has the homotopy type of a 
finite $2$-dimensional CW-complex with one $0$-cell, $m_1$ $1$-cells and $m_2$
$ 2$-cells. The Euler characteristic of $M$ is then $1-m_1+m_2$.

This decomposition gives a presentation
$$\G=\langle g_1,\,\ldots,\, g_{m_1} \: \mid \: r_1, \, \ldots, \, r_{m_2} \rangle.$$
In other terms, $\Hom(\G,G)$ is the preimage under a regular map from $G^{m_1}$ to $\G^{m_2}$
of the point $(1,\ldots,1)\in G^{m_2}$. As an application of fact \ref{fact:codim} the dimension of any irreducible component -- e.g. $R_0$ -- is greater than
$n\cdot(m_1-m_2)=-d\chi(M)+d$.
\end{proof}

\subsection{Conclusion}\label{ss:general}

Replacing both lemmas of the proof in the simple case by their general version above, replacing $n-1$ by $r$ and $n^2-1$ by $d$, we readily get the announced lower bound:
$$\dim(X_0) \geq r.t  - d\chi(M)+z.$$

\bibliographystyle{amsalpha}
\bibliography{bibli}
\end{document}